 \newtheorem{thm}{Theorem}[section]
 \theoremstyle{definition}
 \newtheorem{defn}[thm]{Definition}
 \theoremstyle{remark}
 \newtheorem*{ex}{Example}
 \numberwithin{equation}{section}
\begin{document}

%
%
%
%
%
%
%
%
%

\title[Base Polynomials for Schultz Invariants of Linear Phenylenes]
 {\begin{center} Base Polynomials for Schultz Invariants of Linear Phenylenes \end{center}}

\author{Abdul Rauf Nizami}
\address{Faculty of Information Technology, University of Central Punjab, Lahore-Pakistan}
\email{arnizami@ucp.edu.pk}
\author{Muhammad Aslam Malik}
\address{Department of Mathematics, University of the Punjab, Lahore-Pakistan}
\email{aslam.math@pu.edu.pk}
\author{Zahid Mahmood}
\address{Department of Mathematics, University of the Punjab, Lahore-Pakistan}
\email{zahidmahmoodm38@gmail.com}


\maketitle

\begin{abstract}
Let $L_{n}$ be the molecular graph of linear $[n]$phenylene, and  $L'_{n}$ the graph obtained by attaching 4-membered cycles to terminal hexagons of $L_{n-1}$. Thus, $L'_{n}$ is the molecular graph of the $\alpha,\omega$ - dicyclobutadieno derivative of $[n-1]$phenylene, containing $n-1$ hexagons and $n$ squares.
In this paper we give  polynomials which serve as bases for Schultz invariants. Actually, we represent lengths of paths among vertices of degrees 2-2, 2-3, and 3-3 of $L_{n}$ and $L'_{n}$ in terms of polynomials, which are used to find Schultz polynomial, modified Schultz polynomial, Schultz index, and modified Schultz index.
\end{abstract}
\subjclass{\textbf{Subject Classification (2010)}.  05C12; 05C07; 05C31}

\keywords{\textbf{Keywords}. Linear Phenylenes; Schultz polynomials; Schultz indices}

\pagestyle{myheadings}
\markboth{\centerline {\scriptsize
 Nizami, Aslam, and Zahid}} {\centerline {\scriptsize
 Base Polynomials for Schultz Invariants of Linear Phenylenes}}
\section{Introduction}

 \noindent A \emph{graph} $G$ is a pair $(V,E)$, where $V$ is the set of vertices and $E$ the set of edges. The edge $e$ between two vertices $u$ and $v$ is denoted by $(u,v)$. The \emph{degree} of a vertex $u$, denoted by $d_{u}$ is the number of edges incident to it. A \emph{path} from a vertex $v$ to a vertex $w$ is a sequence of vertices and edges that starts from $v$ and stops at $w$. The number of edges in a path is the \emph{length} of that path. A graph is said to be \emph{connected} if there is a path between any two of its vertices, as you can see in Figure~\ref{fig1}.\\
\begin{figure}[h]
  \centering
  \includegraphics[width=5cm]{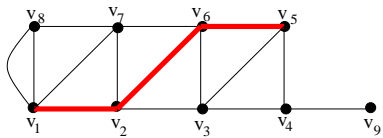}\\
  \caption{A connected graph with a highlighted shortest path from $v_{1}$ to $v_{5}$, and with $d(v_{1})=4$ and $d(v_{5})=3$}\label{fig1}
\end{figure}

\noindent A \emph{molecular graph} is a representation of a chemical compound in terms of graph theory. Specifically, molecular graph is a graph whose vertices correspond to (carbon) atoms of the compound and whose edges correspond to chemical bonds. For instance, the molecular graph of 1-pentene $C_{5}H_{10}$ is given in Figure~\ref{fig2}.

\begin{figure}[h]
\begin{minipage}[h]{12cm}
 \centering
  \includegraphics[width=3.5cm]{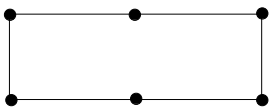}\\
  \caption{Benzene molecule}\label{fig2}
\end{minipage}

\end{figure}

\begin{defn}
\noindent A function $I$ which assigns to every connected graph $G$ a unique number $I(G)$ is called a \emph{graph invariant}. Instead of the function $I$ it is custom to say the number $I(G)$ as the invariant. An invariant of a molecular graph which can be used to determine structure-property or structure-activity correlation is called the \emph{topological index}. A topological index is said to be \emph{degree (distance)} based if it depends on degrees (distance) of the vertices of the graph.
\end{defn}

\noindent In 1989 Harry Schultz introduced the Schultz index in \cite{Schultz:89} and was further studied in \cite{Schultz:00,Gutman:94,Dobrynin:99}.
\begin{defn}\cite{Schultz:89}
The Schultz index of $G$ is defined as
$$S(G) = \frac{1}{2}\sum_{\{v,u\}\in V} (\delta_{u}+\delta_{v}){d(u,v)}$$
Here $d_{u}$ is the degree of the vertex $u$ and $u\neq v$.
\end{defn}
\noindent In 1997 Klavzar and Gutman introduced the modified Schultz index.
\begin{defn} \cite{Klavzar-Gutman:97}
The modified Schultz index of $G$ is defined as
$$MS(G) = \frac{1}{2}\sum_{\{v,u\}\in V} (\delta_{u}\delta_{v}){d(u,v)}$$
Here $d_{u}$ is the degree of the vertex $u$ and $u\neq v$.
\end{defn}

\noindent The Schultz and modified Schultz polynomials were introduced by Gutman in 2005 and found some relations of these polynomials with Wiener polynomial of trees \cite{Gutman:05}.

\begin{defn}\cite{Gutman:05}
The Schultz and modified Schultz polynomials of $G$ are defined respectively as:
\begin{eqnarray*}
  H_{1}(G,x) &=& \frac{1}{2}\sum_{\{v,u\}\in V} (\delta_{u}+\delta_{v})x^{d(u,v)} \\
  H_{2}(G,x) &=& \frac{1}{2}\sum_{\{v,u\}\in V} (\delta_{u}\delta_{v})x^{d(u,v)}
\end{eqnarray*}
Here $d_{u}$ is the degree of the vertex $u$ and $u\neq v$.
\end{defn}
\noindent In 2006 Sen-Peng et. al. did a similar work for hexagonal chains \cite{sen-peng:06}. In 2008 Eliasi and Taeri gave Schultz polynomials of some composite graphs \cite{eliasi-taeri:08}.\\

\begin{defn}
The \emph{linear $[n]$-phenylene} $L_{n}$ and $\alpha,\omega-$dicyclobutadieno derivative of $[n-1]$-phenylene $L'_{n}$ are defined respectively in Figure~\ref{fig3} and Figure~\ref{fig4}:
\begin{figure}[h]
  \centering
  \includegraphics[width=8cm]{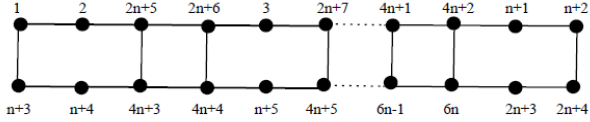}\\
  \caption{linear $[n]$-phenylene, $L_{n}$}\label{fig3}
\end{figure}
\begin{figure}[h]
  \centering
  \includegraphics[width=8cm]{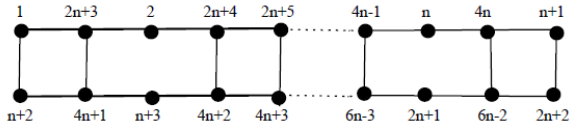}\\
  \caption{$\alpha,\omega-$dicyclobutadieno derivative of $[n-1]$phenylene, $L'_{n}$}\label{fig4}
\end{figure}
\end{defn}

\noindent Ali and Ahmed computed Hosoya polynomials of the pentachains \cite{Ali:11}, and Gutman et al gave generalized Wiener indices of zigzagging pentachains  \cite{Gutman:07}. In this article we give base polynomials for Schultz invariants of linear $[n]$-phenylene and its derivative.

\section{The Results}
Throughout this article by $D^{p,q}$ we shall mean the matrix that represents the distances, and by $H_{b}^{p,q}$ we shall mean the base polynomial that counts the paths of different lengths among vertices of degrees $p$ and $q$ of $L_{n}$ and $L'_{n}$.

\noindent \textbf{A Notational Digression}: By $\#(A_{i})$ we shall mean the number of times $A_{i}$ appears in $D$, and by $\#(k)\in A_{i}$ we shall mean the number of times $k$ appears in $A_{i}$.\\

\subsection{Linear $[n]$-Phenylene}

\begin{thm}\label{thm2.1} Let $n\geq 2$. Then
\begin{eqnarray*}
  H^{2,2}_{b}(L_{n})) &=& 6x^{1}+2\sum_{k=2}^{n}(n-k+3)x^{3k-2}+4\sum_{k=1}^{n-1}x^{3k-1}\\
  &&+6x^{3n-1}+2\sum_{k=2}^{n}(n-k)x^{3k}+2x^{3n}.
   \end{eqnarray*}
\end{thm}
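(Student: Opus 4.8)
The plan is to read off the coefficient of $x^m$ in $H^{2,2}_b(L_n)$ as the number of unordered pairs $\{u,v\}$ of degree-$2$ vertices with $d(u,v)=m$, and to obtain these counts from an explicit description of the distance matrix $D^{2,2}$. First I would fix the structure: label the hexagons $1,\ldots,n$ from left to right, with square $i$ fused to hexagons $i$ and $i+1$. Each interior hexagon contributes exactly two degree-$2$ vertices, its top-middle $T_i$ and bottom-middle $B_i$, while each of the two terminal hexagons contributes two further degree-$2$ vertices on its outer vertical edge; this accounts for the $2n+4$ degree-$2$ vertices. I would arrange them into a top rail $L_1^t,T_1,\ldots,T_n,R_n^t$ and a bottom rail $L_1^b,B_1,\ldots,B_n,R_n^b$, and give each vertex a horizontal coordinate in which passing from one hexagon to the next advances the coordinate by $3$: the interior vertices $T_i,B_i$ sit at coordinate $3i-2$, the left terminals at $0$, and the right terminals at $3n-1$.

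The second step is a distance lemma. Along a single rail the shortest path is monotone, so two vertices on the same rail are at distance equal to the difference of their coordinates. A top--bottom pair in different columns must cross from one rail to the other by a single rung, and since a rung (a vertical hexagon edge) is present at every coordinate except those congruent to $1$ modulo $3$, such a pair lies at distance (coordinate difference) $+\,1$. The sole exception is an interior same-column pair $\{T_i,B_i\}$: these are antipodal on their hexagon and so lie at distance $3$, whereas the two terminal same-column pairs $\{L_1^t,L_1^b\}$ and $\{R_n^t,R_n^b\}$ are joined by an actual rung and lie at distance $1$. I would record all of this as the entries of $D^{2,2}$; the one point demanding care is to confirm that no geodesic shortcuts by leaving and re-entering a rail, which holds because any such detour costs at least the two extra edges of a rung.

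With $D^{2,2}$ in hand the count separates by the residue of $m$ modulo $3$, matching the three exponent families $3k-2$, $3k-1$, $3k$. Among the $n+2$ coordinates a rail realizes each difference $3d$ (for $1\leq d\leq n-1$) with multiplicity $n-d$ and each difference $3j-2$ (for $1\leq j\leq n$) with multiplicity $2$, one reading from each terminal, the only difference congruent to $2$ modulo $3$ being the full span $3n-1$. Passing these through the distance lemma, a length $3k-2$ with $k\geq 2$ collects $4$ from the two same-rail copies of difference $3k-2$ together with $2(n-k+1)$ from the cross-rail pairs of difference $3(k-1)$, for the coefficient $2(n-k+3)$; a length $3k-1$ with $1\leq k\leq n-1$ collects only the cross-rail contribution $4$. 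The ends of the range supply the isolated terms: the two terminal rungs together with the four unit-length same-rail pairs give $6x^{1}$; the two full-span same-rail pairs together with the four cross-rail pairs of coordinate difference $3n-2$ give $6x^{3n-1}$; and the two cross-rail pairs joining opposite terminals give $2x^{3n}$. Collecting the three residue classes assembles the claimed polynomial.

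The hard part will be the bookkeeping at the two ends of the chain and at the shortest lengths, where the regular periodic pattern breaks down. The terminal vertices $L_1^t,L_1^b,R_n^t,R_n^b$ are responsible both for the ``$+3$'' correction in the coefficient $n-k+3$ and for every isolated term, so short and long pairs must be enumerated separately from the generic interior ones; in particular the length-$3$ class is the most delicate, since there the $n$ interior antipodal pairs $\{T_i,B_i\}$ accumulate alongside the short same-rail pairs and must be reconciled with the stated coefficients. As a safeguard I would verify the whole tally against the identity that the coefficients sum to $\frac{(2n+4)(2n+3)}{2}$, and I would check the base cases $n=2$ and $n=3$ directly before trusting the general pattern.
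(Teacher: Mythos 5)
Your approach is correct but genuinely different from the paper's. The paper simply exhibits the block distance matrix $D^{2,2}_{L_n}=\left(\begin{smallmatrix}A_0 & A_1\\ A_1 & A_0\end{smallmatrix}\right)$ with its entries asserted rather than derived, and then tallies how often each value $k$ occurs in $A_0$ (counted twice) and in $A_1$ (counted once) to get the coefficients $c_k$. You instead derive the matrix: the two-rail coordinate system, the observation that same-rail distances equal coordinate differences while cross-rail distances are the difference plus one (with the same-column exceptions, distance $3$ for interior antipodal pairs and $1$ for the two terminal rungs), and then a count organized by residue class mod $3$. Your blocks $A_0$ and $A_1$ are exactly the paper's (e.g.\ the first row of the paper's $A_1$ is $1,2,5,8,\ldots,3n-1,3n$, which is your ``difference $+1$'' rule applied from the coordinate-$0$ vertex), and every coefficient you produce agrees with the paper's $c_1,\ldots,c_{3n}$. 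What your route buys is a justification of the distance values that the paper omits entirely, plus the consistency check that the coefficients must sum to $\binom{2n+4}{2}$.

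That check, which you propose as a safeguard, in fact exposes a defect in the statement rather than in your argument: the coefficients of the displayed formula sum to $2n^2+4n+8$, which falls short of $(n+2)(2n+3)$ by exactly $3n-2$. The missing term is $(3n-2)x^{3}$: your count at distance $3$ gives $2(n-1)$ same-rail pairs plus $n$ interior antipodal pairs $\{T_i,B_i\}$, i.e.\ $3n-2$, which is precisely the paper's own $c_3=(n-1)(2)+(n)(1)=3n-2$ and matches the $13x^3$ term in the paper's $n=5$ example, yet no exponent in the stated formula equals $3$ (the sums over $3k-2$ and $3k$ start at $k=2$). So you should not try to ``reconcile'' the length-$3$ class with the stated coefficients; rather, record $(3n-2)x^3$ as an additional term that the theorem as printed omits.
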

\begin{proof}
The matrix $D_{L_{n}}^{2,2}$ is symmetric with order $(2n+4) \times (2n+4)$. Each row of $D_{L_{n}}^{2,2}$ represents the distances from a vertex $v_{i}$ to the vertices $v_{1},v_{2},\ldots, v_{2n+4}$, respectively. The general form of $D_{L_{n}}^{2,2}$ is
 $$D_{L_{n}}^{2,2}=\left( \begin{array}{cc}
  A_{0}       & A_{1}     \\
  A_{1}   & A_{0}     \\
   \end{array}
\right),$$

\noindent where $A_{0}$ and $A_{1}$ are submatrices of order $n+2\times n+2$ and are:

$$A_{0}=\left(\scriptsize \begin{array}{cccccccccc}
  0 &  1   & 4  & 7 & 10  & 13  & \cdots & 3n-5 & 3n-2 & 3n-1\\
    &  0   & 3  & 6 & 9 & 12  &  \cdots & 3n-6 & 3n-3 & 3n-1\\
    &      & 0  & 3  & 6 & 9 &   \cdots & 3n-9 & 3n-6 & 3n-5\\
    &      &    & \ddots   &   &  &   \vdots &  & & \vdots\\
    &      &    &    &   &  &  0  & 3 &6 & 7\\
     &      &    &    &   &  &    & 0 &3 & 4\\
       &      &    &    &   &  &    &  &0 & 1\\
         &      &    &    &   &  &    &  & & 0\\
   \end{array}
\right)$$
$$A_{1}=\left(\scriptsize \begin{array}{cccccccc}
  1   & 2  & 5 & 8  & \cdots & 3n-4 & 3n-1 & 3n\\
  2   & 3  & 4 & 7  & \cdots & 3n-5 & 3n-2 & 3n-1\\
  5   & 4  & 3 & 4  & \cdots & 3n-8 & 3n-5 & 3n-4\\
  \vdots & &   &    & \vdots &     &     &\vdots\\
  3n-4   & 3n-5  & 3n-8 & 3n-11  & \cdots & 3 &4 & 5\\
  3n-1   & 3n-2  & 3n-5 & 3n-8  & \cdots & 4 &3 & 2\\
  3n   & 3n-1  & 3n-4 & 3n-7  & \cdots & 5 &2 & 1\\
  \end{array}
\right)$$

\noindent Now we give the counts of distinct paths among vertices of degree 2 in $L{n}$: \\

\noindent $c_{1} = \#(1)\in A_{0}\times \#(A_{0})+\#(1)\in A_{1}\times \#(A_{1})=(2)(2)+(2)(1)=6$\\
\noindent $c_{2} = \#(2)\in A_{0}\times \#(A_{0})+\#(2)\in A_{1}\times \#(A_{1})=(0)(2)+(4)(1)=4$\\
\noindent $c_{3} = \#(3)\in A_{0}\times \#(A_{0})+\#(3)\in A_{1}\times \#(A_{1})=(n-1)(2)+(n)(1)=3n-2$\\
\noindent $c_{4} = \#(4)\in A_{0}\times \#(A_{0})+\#(4)\in A_{1}\times \#(A_{1})=(2)(2)+(2n-2)(1)=2n+2$\\
\noindent $c_{5} = \#(5)\in A_{0}\times \#(A_{0})+\#(5)\in A_{1}\times \#(A_{1})=(0)(2)+(4)(1)=4$\\
\noindent $c_{6} = \#(6)\in A_{0}\times \#(A_{0})+\#(6)\in A_{1}\times \#(A_{1})=(n-2)(2)+(0)(1)=2n-2$\\
\noindent $c_{3n-3} = \#(3n-3)\in A_{0}\times \#(A_{0})+\#(3n-3)\in A_{1}\times \#(A_{1})=(1)(2)+(0)(1)=2$\\
\noindent $c_{3n-2} = \#(3n-2)\in A_{0}\times \#(A_{0})+\#(3n-2)\in A_{1}\times \#(A_{1})=(2)(2)+(2)(1)=6$\\
\noindent $c_{3n-1} = \#(3n-1)\in A_{0}\times \#(A_{0})+\#(3n-1)\in A_{1}\times \#(A_{1})=(1)(2)+(4)(1)=6$\\
\noindent $c_{3n} = \#(3n)\in A_{0}\times \#(A_{0})+\#(3n)\in A_{1}\times \#(A_{1})=(0)(2)+(2)(1)=2$
\end{proof}

\begin{thm}\label{thm2.2} For $n\geq 2$, we have
\begin{eqnarray*}
 H^{2,3}_{b}(L_{n}) &=& 2(2n-2)x^{1}+4\sum_{k=2}^{n-1}(n-k+1)x^{3k-2}+4\sum_{k=1}^{n-1}(2n-2k+1)x^{3k-1}\\
 &&+4\sum_{k=1}^{n-1}(n-k+2)x^{3k}+4x^{3n-2}
\end{eqnarray*}
\end{thm}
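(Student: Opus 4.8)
The plan is to follow verbatim the matrix-census method used for Theorem~\ref{thm2.1}. The graph $L_{n}$ carries $2n+4$ vertices of degree $2$ (the order of $D^{2,2}_{L_{n}}$), while each of the $n-1$ squares contributes four junction vertices of degree $3$, giving $4n-4$ vertices of degree $3$. I would therefore form the rectangular distance matrix $D^{2,3}_{L_{n}}$ of order $(2n+4)\times(4n-4)$, whose $(i,j)$ entry is the length of the shortest path from the $i$-th degree-$2$ vertex to the $j$-th degree-$3$ vertex. The coefficient of $x^{k}$ in $H^{2,3}_{b}(L_{n})$ is then exactly the number of occurrences of the value $k$ in $D^{2,3}_{L_{n}}$, so the entire problem reduces to an entry census of this matrix. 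Note that, unlike the symmetric $D^{2,2}_{L_{n}}$, this matrix is not symmetric and records each (degree-$2$, degree-$3$) pair exactly once, so no halving is involved.

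Next I would exploit the two reflection symmetries of the linear phenylene --- the long horizontal axis and the vertical mid-line --- to write $D^{2,3}_{L_{n}}$ as a small number of repeating submatrices, in the same spirit as the decomposition of $D^{2,2}_{L_{n}}$ through $A_{0}$ and $A_{1}$. Reading one row of such a block shows that the distance from a fixed vertex to the corresponding degree-$3$ vertices of successive hexagons increases in steps of $3$; this is the origin of the period-$3$ behaviour and explains why the exponents fall into the three arithmetic families $3k-2$, $3k-1$, and $3k$ (distances $\equiv 1,2,0 \pmod 3$). I would then sort the entries by residue class mod $3$ and, inside each class, tally how many times each admissible value occurs, the common factor $4$ in the interior sums reflecting the block multiplicity forced by the two symmetries.

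The substance of the proof is the resulting bookkeeping: for each residue class I would write the multiplicity of a generic distance $3k-2$, $3k-1$, or $3k$ as a linear function of $n$ and $k$ --- namely $n-k+1$, $2n-2k+1$, and $n-k+2$ respectively --- and sum $k$ over the range in which the regular pattern persists. The terminal hexagons spoil the pattern at both ends, so the two exceptional contributions must be peeled off and checked by hand: the term $2(2n-2)x^{1}$, which counts the $4n-4$ edges joining a degree-$2$ vertex to a degree-$3$ vertex (the $k=1$ member of the $3k-2$ family, whose coefficient differs from $4(n-k+1)$), and the single term $4x^{3n-2}$, which records the largest degree-$2$/degree-$3$ separations (the $k=n$ member of the same family).

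The step I expect to be the main obstacle is exactly this boundary analysis: fixing the precise ranges of the three summations ($k=2,\dots,n-1$ for the $3k-2$ family, but $k=1,\dots,n-1$ for the other two) and confirming that the contributions of the first and last hexagons are each counted once and only once. A secondary difficulty is keeping the indexing of the non-square matrix straight, since one cannot recover an unordered count by halving as in the symmetric case; each ordered (degree-$2$, degree-$3$) pair must be assigned the correct block and multiplicity. Once these edge cases are pinned down, assembling the per-class tallies into the three sums produces the stated formula.
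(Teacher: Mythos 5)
Your plan is essentially the paper's own proof: the paper likewise builds the degree-2/degree-3 distance matrix $D^{2,3}_{L_{n}}$, decomposes it by symmetry into two repeating blocks $A_{0}$ and $A_{1}$, and reads off the coefficient of $x^{k}$ as a census of the value $k$ over the blocks, with the distances organized into the three residue classes $3k-2$, $3k-1$, $3k$ and the boundary terms $2(2n-2)x^{1}$ and $4x^{3n-2}$ handled separately. The only (harmless) discrepancy is in the matrix dimensions --- you take $(2n+4)\times(4n-4)$, consistent with the degree-2 count used in Theorem~\ref{thm2.1}, while the paper states blocks of order $(n+1)\times(2n-2)$ --- so your proposal matches the paper's method and, if anything, is more careful about the bookkeeping.
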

\begin{proof}
In this case we have
 $D_{L_{n}}^{2,3}=\left( \begin{array}{cc}
  A_{0}      & A_{1} \\
  A_{1}      & A_{0} \\

   \end{array}
\right),$
where $ A_{0}$ and $A_{1}$ are submatrices of order $n+1\times 2n-2$  and are:

$$A_{0}=\left(\scriptsize \begin{array}{cccccccc}
  2   & 3  & 5 & 6  & \cdots & 3n-6 & 3n-4 & 3n-3\\
  1   & 2  & 4 & 5  & \cdots & 3n-7 & 3n-5 & 3n-4\\
  2   & 1  & 1 & 2  & \cdots & 3n-10 & 3n-8 & 3n-7\\
  \vdots & &   &    & \vdots &     &     &\vdots\\
  3n-7   & 3n-8  & 3n-10 & 3n-11  & \cdots & 1 &1 & 2\\
  3n-4   & 3n-5  & 3n-7 & 3n-8  & \cdots & 4 &2 & 1\\
  3n-3   & 3n-4  & 3n-6 & 3n-7  & \cdots & 5 &3 & 2\\
   \end{array}
\right)$$

$$A_{1}=\left(\scriptsize \begin{array}{cccccccc}
  2   & 3  & 5 & 6  & \cdots & 3n-6 & 3n-4 & 3n-3\\
  1   & 2  & 4 & 5  & \cdots & 3n-7 & 3n-5 & 3n-4\\
  2   & 1  & 1 & 2  & \cdots & 3n-10 & 3n-8 & 3n-7\\
  \vdots & &   &    & \vdots &     &     &\vdots\\
  3n-7   & 3n-8  & 3n-10 & 3n-11  & \cdots & 1 &1 & 2\\
  3n-4   & 3n-5  & 3n-7 & 3n-8  & \cdots & 4 &2 & 1\\
  3n-3   & 3n-4  & 3n-6 & 3n-7  & \cdots & 5 &3 & 2\\
  \end{array}
\right)$$

\noindent Now we go for paths: \\
\noindent $c_{1} = \#(1)\in A_{0}\times \#(A_{0})+\#(1)\in A_{1}\times \#(A_{1})=(2n-2)(2)+(2)(0)=2n-2$\\
\noindent $c_{2} = \#(2)\in A_{0}\times \#(A_{0})+\#(2)\in A_{1}\times \#(A_{1})=(2n)(2)+(2n-2)(2)=8n-4$\\
\noindent $c_{3} = \#(3)\in A_{0}\times \#(A_{0})+\#(3)\in A_{1}\times \#(A_{1})=(2)(2)+(2n)(2)=4n+4$\\
\noindent $c_{4} = \#(4)\in A_{0}\times \#(A_{0})+\#(4)\in A_{1}\times \#(A_{1})=(2n-4)(2)+(2)(2)=4n-4$\\
\noindent $c_{5} = \#(5)\in A_{0}\times \#(A_{0})+\#(5)\in A_{1}\times \#(A_{1})=(2n-2)(2)+(2n-4)(2)=8n-12$\\
\noindent $c_{6} = \#(6)\in A_{0}\times \#(A_{0})+\#(6)\in A_{1}\times \#(A_{1})=(2)(2)+(2n-2)(2)=2n-2$\\
\noindent $c_{3n-5} = \#(3n-5)\in A_{0}\times \#(A_{0})+\#(3n-5)\in A_{1}\times \#(A_{1})=(2)(2)+(2)(2)=8$\\
\noindent $c_{3n-4} = \#(3n-4)\in A_{0}\times \#(A_{0})+\#(3n-4)\in A_{1}\times \#(A_{1})=(4)(2)+(2)(2)=12$\\
\noindent $c_{3n-3} = \#(3n-3)\in A_{0}\times \#(A_{0})+\#(3n-3)\in A_{1}\times \#(A_{1})=(2)(2)+(4)(2)=12$\\
\noindent $c_{3n-2} = \#(3n-2)\in A_{0}\times \#(A_{0})+\#(3n-2)\in A_{1}\times \#(A_{1})=(0)(2)+(2)(2)=4$
\end{proof}

\begin{thm}\label{thm2.3} For $n\geq 2$ we have
\begin{eqnarray*}
  H^{3,3}_{b}(L_{n}) &=& 4(n-1)x^{1}+6\sum_{k=2}^{n-1}(n-k)x^{3k-2}+2\sum_{k=1}^{n-1}(2n-2k-1)x^{3k-1}\\&&+6\sum_{k=1}^{n-2}(n-k-1)x^{3k}.
   \end{eqnarray*}
\end{thm}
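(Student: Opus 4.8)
The plan is to follow the template of Theorems~\ref{thm2.1} and~\ref{thm2.2}: assemble the distance matrix $D^{3,3}_{L_n}$ among the degree-$3$ vertices, read off its block structure from the symmetry of $L_n$, and then count the paths of each length. First I would note that the degree-$3$ vertices are exactly the $4(n-1)$ corners of the $n-1$ squares. Writing the four corners of the $i$-th square as $u_i,v_i,w_i,z_i$ in cyclic order, with $u_iv_i$ the edge shared with the hexagon on the left and $w_iz_i$ the edge shared with the hexagon on the right, and listing the upper corners $v_1,w_1,\dots,v_{n-1},w_{n-1}$ before the lower corners $u_1,z_1,\dots,u_{n-1},z_{n-1}$, the top--bottom reflection of $L_n$ (which swaps $u_i\leftrightarrow v_i$ and $z_i\leftrightarrow w_i$) puts $D^{3,3}_{L_n}$ into the block form with equal diagonal blocks $A_0$ and equal off-diagonal blocks $A_1$, both of order $(2n-2)\times(2n-2)$, exactly as before.

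The decisive ingredient is a distance lemma for the entries of $A_0$ and $A_1$, proved by tracing geodesics through the alternating hexagon--square backbone. I would establish that (i) the six within-square distances are $1$ on each of the four edges and $2$ on each of the two diagonals, and (ii) for two squares whose indices differ by $m\ge1$, the sixteen cross-distances take the values $3m-1,\,3m,\,3m+1,\,3m+2$ with multiplicities $2,6,6,2$ respectively. The factor $3$ in (ii) records that consecutive squares are separated by a single hexagon, so corresponding corners sit at backbone-distance $3$; the offsets $-1,0,+1,+2$ around $3m$ encode the four relative positions of a corner inside its square, and the multiplicities $2,6,6,2$ come from a corner-by-corner count identical to the one that yields, for $m=1$, the values $2,3,4,5$.

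With the lemma in hand the rest is bookkeeping via $c_\ell=\#(\ell)\in A_0\times\#(A_0)+\#(\ell)\in A_1\times\#(A_1)$, summed over the $(n-1)-m$ square-pairs at each gap $m\ge1$ and the $n-1$ single squares. Length $1$ occurs only on square edges, giving $4(n-1)$. A length $3k$ is produced solely by the $3m$-value at gap $m=k$, giving $6(n-1-k)$ for $k=1,\dots,n-2$. A length $3k-1$ is produced both by the $3m-1$ value at gap $k$ and by the $3m+2$ value at gap $k-1$; since the within-square diagonals fill the $k=1$ boundary in exactly the $3m+2$ slot at $m=0$ with multiplicity $2$, this totals $2(n-1-k)+2(n-k)=2(2n-2k-1)$ for $k=1,\dots,n-1$. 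Finally a length $3k-2$ with $k\ge2$ is produced only by the $3m+1$ value at gap $k-1$, giving $6(n-k)$; the would-be $k=1$ term is the degenerate within-square case, whose edges carry multiplicity $4$ rather than the generic $6$, which is precisely why the $x^1$ term is split off and the $3k-2$ sum begins at $k=2$. Reindexing these four families over the stated ranges reproduces the asserted polynomial.

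The step I expect to be the main obstacle is the distance lemma, in particular verifying the multiplicity pattern $2,6,6,2$ uniformly in $m$ and isolating the degenerate $m=0$ case, which breaks the generic pattern only for the edge-distances, forcing the separate $x^1$ term, while the diagonals merge smoothly into the $3k-1$ family. A secondary care point is the reindexing: because each distance value is fed by two consecutive gaps, one must check that the boundary terms land exactly on the summation limits $k=2$, $k=1$, and $k=n-2$ appearing in the statement.
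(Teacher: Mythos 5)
Your proposal is correct: the distance lemma (within-square distances $1,1,1,1,2,2$; cross-square distances $3m-1,3m,3m+1,3m+2$ with multiplicities $2,6,6,2$ at gap $m$) is true for $L_n$, and your bookkeeping over the $(n-1)-m$ square pairs at each gap reproduces every coefficient of the stated polynomial, including the boundary behaviour ($x^{1}$ split off because the $m=0$ edge multiplicity is $4$ rather than $6$, the square diagonals feeding the $k=1$ term of the $x^{3k-1}$ family, and the top term $2x^{3n-4}$ from the single pair at gap $n-2$). However, your route is genuinely different from the paper's. The paper displays the full distance matrix $D^{3,3}_{L_n}$ explicitly as a $2\times2$ block matrix $\left(\begin{smallmatrix}A_0&A_1\\A_1&A_0\end{smallmatrix}\right)$ coming from the top--bottom reflection, writes out the entries of $A_0$ and $A_1$, and obtains each coefficient $c_\ell$ by counting how many times the value $\ell$ occurs in each block ($c_\ell=\#(\ell)\in A_0\cdot 2+\#(\ell)\in A_1\cdot 1$); the distance values themselves are asserted rather than derived. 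You instead partition the pairs of degree-$3$ vertices by the pair of squares they lie on and exploit translation invariance along the chain, so that the entire computation reduces to one local verification (the $m=1$ pattern $2,3,4,5$ with multiplicities $2,6,6,2$, plus the degenerate $m=0$ case) followed by a sum over gaps. Your approach buys a proof of the distance data and a transparent explanation of why each coefficient is fed by at most two gaps, at the cost of having to argue the uniformity in $m$; the paper's approach avoids any lemma but leaves the matrix entries unjustified and the coefficient extraction as pattern-reading. Both decompositions are complete (the $\binom{4n-4}{2}$ pairs split exactly into $(n-1)\binom{4}{2}$ within-square and $16\binom{n-1}{2}$ cross-square pairs), so nothing is missed.
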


\begin{proof}
Here the distance matrix is
$D_{L_{n}}^{3,3}=\left( \begin{array}{cc}
  A_{0}     & A_{1} \\
  A_{1}   & A_{0}\\
   \end{array}
\right),$
where $ A_{0}$ and $A_{1}$ are submatrices of order $2n-2\times 2n-2$, and are:

$$A_{0}=\left(\scriptsize \begin{array}{ccccccccc}
  0   & 1  & 3 & 4  & 6  & \cdots & 3n-8 & 3n-6 & 3n-5\\
      & 0  & 2 & 3  & 5  & \cdots & 3n-9 & 3n-7 & 3n-6\\
      &    & 0 & 1  & 3  & \cdots & 3n-11 & 3n-9 & 3n-8\\
      &    &   & \ddots & & \vdots &   \vdots &\vdots    &\vdots\\

      &    &   & & & 0  &1 &3 & 4\\
      &    &   &  & &    &0 &2 & 3\\
      &    &   &  &&    &  &0 & 1\\
      &    &   &  &&    &  &  & 0\\
   \end{array}
\right)$$

$$A_{1}=\left(\scriptsize \begin{array}{cccccccc}
  1   & 2  & 4 & 5 & \cdots & 3n-7 &3n-5  &3n-4 \\
      2   & 1  & 3 & 4 & \cdots & 3n-8 &3n-6  &3n-5 \\
   4   & 3  & 1 & 2 & \cdots & 3n-10 &3n-8  &3n-7 \\
 5   & 4  & 2 & 1 & \cdots & 3n-11 &3n-9  &3n-8 \\
  \vdots &   &&       & \vdots  &  & &\vdots\\
  3n-7   & 3n-8  & 3n-10 &3n-11& \cdots & 1 & 3  &4  \\
  3n-5  & 3n-6  & 3n-8&3n-9 & \cdots & 3 &1  &2 \\
  3n-4   & 3n-5  & 3n-7 &3n-9& \cdots & 4 &2  &1 \\
   \end{array}
\right)$$

\noindent The following are counts of distinct paths in $L_{n}$. \\
\noindent $c_{1} = \#(1)\in A_{0}\times \#(A_{0})+\#(1)\in A_{1}\times \#(A_{1})=(n-1)(2)+(2n-2)(1)=4n-4$\\
\noindent $c_{2} = \#(2)\in A_{0}\times \#(A_{0})+\#(2)\in A_{1}\times \#(A_{1})=(n-2)(2)+(2n-2)(1)=4n-6$\\
\noindent $c_{3} = \#(3)\in A_{0}\times \#(A_{0})+\#(3)\in A_{1}\times \#(A_{1})=(2n-4)(2)+(2n-4)(1)=6n-12$\\
\noindent $c_{4} = \#(4)\in A_{0}\times \#(A_{0})+\#(4)\in A_{1}\times \#(A_{1})=(n-2)(2)+(4n-8)(1)=6n-12$\\
\noindent $c_{5} = \#(5)\in A_{0}\times \#(A_{0})+\#(5)\in A_{1}\times \#(A_{1})=(n-3)(2)+(2n-4)(1)=4n-10$\\
\noindent $c_{6} = \#(6)\in A_{0}\times \#(A_{0})+\#(6)\in A_{1}\times \#(A_{1})=(2n-6)(2)+(2n-6)(1)=6n-18$\\
\noindent $c_{3n-7} = \#(3n-7)\in A_{0}\times \#(A_{0})+\#(3n-7)\in A_{1}\times \#(A_{1})=(1)(2)+(4)(1)=6$\\
\noindent $c_{3n-6} = \#(3n-6)\in A_{0}\times \#(A_{0})+\#(3n-6)\in A_{1}\times \#(A_{1})=(2)(2)+(2)(1)=6$\\
\noindent $c_{3n-5} = \#(3n-5)\in A_{0}\times \#(A_{0})+\#(3n-5)\in A_{1}\times \#(A_{1})=(1)(2)+(4)(1)=6$\\
\noindent $c_{3n-4} = \#(3n-4)\in A_{0}\times \#(A_{0})+\#(3n-4)\in A_{1}\times \#(A_{1})=(0)(2)+(2)(1)=2$
\end{proof}
\begin{ex}
For $n=5$ the base polynomials come from Theorems~\ref{thm2.1}, \ref{thm2.2}, and \ref{thm2.3}, and are respectively
$H^{2,2}_{b}(L_{5})=6x+4x^{2}+13x^{3}+12x^{4}+4x^{5}+6x^{6}+10x^{7}+4x^{8}+4x^{9}+8x^{10}+4x^{11}+2x^{12}+6x^{13}+6x^{14}+2x^{15}$, $H^{2,3}_{b}(L_{5})=16x+36x^{2}+24x^{3}+16x^{4}+28x^{5}+20x^{6}+12x^{7}+20x^{8}+16x^{9}+8x^{10}+12x^{11}+12x^{12}+4x^{13}$, and $H^{3,3}_{b}(L_{5})=16x+14x^{2}+18x^{3}+18x^{4}+10x^{5}+12x^{6}+12x^{7}+6x^{8}+6x^{9}+6x^{10}+2x^{11}$. So, the Schultz polynomial, modified Schultz polynomial, Schultz index, and modified Schultz index are:
\begin{enumerate}
  \item $H_{1}(L_{5})=4H^{2,2}_{b}+5H^{2,3}_{b}+6H^{3,3}_{b}
      =4\big[6x+4x^{2}+13x^{3}+12x^{4}+4x^{5}+6x^{6}+10x^{7}+4x^{8}+4x^{9}+8x^{10}+4x^{11}+2x^{12}+6x^{13}+6x^{14}+2x^{15}\big]+5\big[
      16x+36x^{2}+24x^{3}+16x^{4}+28x^{5}+20x^{6}+12x^{7}+20x^{8}+16x^{9}+8x^{10}+12x^{11}+12x^{12}+4x^{13}\big]      +6\big[16x+14x^{2}+18x^{3}+18x^{4}+10x^{5}+12x^{6}+12x^{7}+6x^{8}+6x^{9}+6x^{10}+2x^{11}\big] =200x+280x^2+ 280x^3+236x^4+216x^5+196x^6+172x^7+152x^8+132x^9+108x^{10}+88x^{11}+68x^{12}+44x^{13}+24x^{14}+8x^{15}$\\

  \item $H_{2}(L_{5})=4H^{2,2}_{b}+6H^{2,3}_{b}+9H^{3,3}_{b}=264x+358x^2+358x^3+306x^4+274x^5+252x^6+220x^7+190x^8+166x^9+134x^{10}+106x^{11}+80x^{12}+48x^{13}+24x^{14}+8x^{15}$\\

  \item $S(L_{5})=\frac{d}{dx}H_{1}(L_{5})|_{x=1}= 12300$\\

  \item $MS(L_{n})=\frac{d}{dx}H_{2}(L_{5})|_{x=1}= 15260 $
\end{enumerate}
\end{ex}
\subsection{$\alpha,\omega-$Dicyclobutadieno Derivative of $[n-1]$phenylene}

\begin{thm}\label{thm2.4} Let $n\geq 2$. Then
\begin{eqnarray*}
 H^{2,2}_{b}(L'_{n})) &=& 2x^{1}+(3n-1)x^{3}+2\sum_{k=2}^{n-1}(n-k)x^{3k-2}+4\sum_{k=1}^{n-1}x^{3k-1}\\
  &&+2\sum_{k=2}^{n}(n-k+1)x^{3k}+2x^{3n-1}
   \end{eqnarray*}
\end{thm}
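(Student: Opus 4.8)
The plan is to mirror the matrix-counting method of Theorems~\ref{thm2.1}--\ref{thm2.3}, adjusted to the modified terminal geometry of $L'_n$. First I would enumerate the degree-$2$ vertices of $L'_n$ and exploit the left--right reflection symmetry of the molecule to record the pairwise distances in block form
$$D^{2,2}_{L'_n}=\left(\begin{array}{cc} A_{0} & A_{1} \\ A_{1} & A_{0} \end{array}\right),$$
where $A_{0}$ collects the distances among the degree-$2$ vertices lying in one half and $A_{1}$ the distances across the two halves. A fresh census of the degree-$2$ vertices is needed here because appending a square to each terminal hexagon changes which terminal vertices carry degree $2$; this altered terminal geometry is precisely what produces the isolated $2x^{1}$ and $(3n-1)x^{3}$ contributions in place of the $6x^{1}$-type boundary behaviour seen for $L_{n}$. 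The same census fixes the order of $A_{0}$ and $A_{1}$, which must be recomputed rather than inherited from Theorem~\ref{thm2.1}.

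Next, for each admissible length $d$ I would form the coefficient
$$c_{d}=\#(d)\in A_{0}\times\#(A_{0})+\#(d)\in A_{1}\times\#(A_{1}),$$
using the multiplicities $\#(A_{0})=2$ and $\#(A_{1})=1$ dictated by the block structure, exactly as in the earlier proofs: the diagonal block $A_{0}$ is counted twice, while the two symmetric off-diagonal copies of $A_{1}$ count once when pairs are taken unordered. Since the diameter among degree-$2$ vertices is $3n-1$, the index $d$ ranges from $1$ to $3n-1$, and the entries of $A_{0}$ and $A_{1}$ fall into the three residue classes $3k-2$, $3k-1$, and $3k$, matching the three summations in the claimed formula. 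Reading the occurrence counts down the generic interior rows of $A_{0}$ and $A_{1}$ should yield the interior coefficients $2(n-k)$, $4$, and $2(n-k+1)$ for the three classes respectively, together with the isolated boundary coefficients $2$ at $d=1$, $3n-1$ at $d=3$, and $2$ at $d=3n-1$.

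The hard part will be pinning down the entry pattern of $A_{0}$ and $A_{1}$ in the rows indexed by vertices on or near the two appended squares, since there the distances depart from the clean arithmetic progression that governs the interior. In particular I expect the $(3n-1)x^{3}$ term to require the most care: it is not covered by any of the interior sums (the $x^{3k}$ sum starts at $k=2$), so it must aggregate the short hops created by the square corners together with the first genuinely interior distance of length $3$. I would therefore verify $\#(3)\in A_{0}$ and $\#(3)\in A_{1}$ by inspecting the first and last few rows directly rather than from a generic row, and likewise confirm the terminal $2x^{1}$ and $2x^{3n-1}$ coefficients from the boundary rows. Once these terminal rows are handled correctly, the remaining counts reduce to routine arithmetic-progression bookkeeping, and summing $c_{d}\,x^{d}$ over $d$ and regrouping by residue class produces the stated polynomial.
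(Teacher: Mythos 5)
Your proposal follows essentially the same route as the paper: the same block decomposition $D^{2,2}_{L'_n}=\left(\begin{smallmatrix} A_{0} & A_{1} \\ A_{1} & A_{0} \end{smallmatrix}\right)$ with $A_{0},A_{1}$ of order $(n+1)\times(n+1)$, the same weighted count $c_{d}=\#(d)\in A_{0}\cdot 2+\#(d)\in A_{1}\cdot 1$, and the same regrouping by residue class mod $3$ with separate treatment of the boundary coefficients $2x^{1}$, $(3n-1)x^{3}$, and $2x^{3n-1}$. The plan is sound and consistent with the paper's proof; completing it is just the row-by-row bookkeeping the paper itself records.
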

\begin{proof}
The matrix that represents lengths of distinct paths among vertices of degree 2 in $L'_{n}$ is
$$D_{L'_{n}}^{2,2}=\left( \begin{array}{cc}
  A_{0}       & A_{1}      \\
  A_{1}   & A_{0}    \\

   \end{array}
\right),$$

\noindent where $A_{0}$ and $ A_{1}$ are submatrices of orders $n+1\times n+1$ and are:
 $$A_{0}=\left(\scriptsize \begin{array}{ccccccccc}
  0   & 2  & 5 & 8  & 11  & \cdots & 3n-7 & 3n-4 & 3n-2\\
      & 0  & 3 & 6  & 9  & \cdots & 3n-9 & 3n-6 & 3n-4\\
      &    & 0 & 3  & 6  & \cdots & 3n-12 & 3n-9 & 3n-7\\
      &    &   & \ddots & & \vdots &   \vdots &\vdots    &\vdots\\

      &    &   & & & 0  &3 &6 & 8\\
      &    &   &  & &    &0 &3 & 5\\
      &    &   &  &&    &  &0 & 2\\
      &    &   &  &&    &  &  & 0\\
   \end{array}
\right)$$
$$A_{1}=\left(\scriptsize \begin{array}{cccccccc}
  1   & 3  & 6 & 9 & \cdots & 3n-6 &3n-3  &3n-1 \\
      3   & 3  & 4 & 7 & \cdots & 3n-8 &3n-5  &3n-3 \\
   6   & 4  & 3 & 4 & \cdots & 3n-11 &3n-8  &3n-6 \\
 9   & 7  & 4 & 3 & \cdots & 3n-14 &3n-11  &3n-9 \\
  \vdots &   &&       & \vdots  &  & &\vdots\\
  3n-6   & 3n-8  & 3n-11 &3n-14& \cdots & 3 & 4  &6  \\
  3n-3  & 3n-5  & 3n-8&3n-11 & \cdots & 4 &3  &3 \\
  3n-1   & 3n-3  & 3n-6 &3n-9& \cdots & 6 &3  &1 \\
   \end{array}
\right)$$

\noindent Now we go for paths: \\
\noindent $c_{1} = \#(1)\in A_{0}\times \#(A_{0})+\#(1)\in A_{1}\times \#(A_{1})=(0)(2)+(2)(1)=2$\\
\noindent $c_{2} = \#(2)\in A_{0}\times \#(A_{0})+\#(2)\in A_{1}\times \#(A_{1})=(2)(2)+(0)(1)=4$\\
\noindent $c_{3} = (\#(3)\in A_{0}\times \#(A_{0})+\#(3)\in A_{1}\times \#(A_{1})=(n-2)(2)+(n+3)(1)=3n-1$\\
\noindent $c_{4} = \#(4)\in A_{0}\times \#(A_{0})+\#(4)\in A_{1}\times \#(A_{1})=(0)(2)+(2n-4)(1)=2n-4$\\
\noindent $c_{5} = \#(5)\in A_{0}\times \#(A_{0})+\#(5)\in A_{1}\times \#(A_{1})=(2)(2)+(4)(0)=4$\\
\noindent $c_{6} = \#(6)\in A_{0}\times \#(A_{0})+\#(6)\in A_{1}\times \#(A_{1})=(n-3)(2)+(4)(1)=2n-2$\\
\noindent $c_{3n-3} = \#(3n-3)\in A_{0}\times \#(A_{0})+\#(3n-3)\in A_{1}\times \#(A_{1})=(0)(2)+(4)(1)=4$\\
\noindent $c_{3n-2} = \#(3n-2)\in A_{0}\times \#(A_{0})+\#(3n-2)\in A_{1}\times \#(A_{1})=(1)(2)+(0)(1)=2$\\
\noindent $c_{3n-1} = \#(3n-1)\in A_{0}\times \#(A_{0})+\#(3n-1)\in A_{1}\times \#(A_{1})=(0)(2)+(2)(1)=2$
\end{proof}
\begin{thm}\label{thm2.5} Let $n\geq 2$ . Then
\begin{eqnarray*}
 H^{2,3}_{b}(L'_{n}) &=& 4nx^{1}+4\sum_{k=2}^{n-1}(n-k+2)x^{3k-2}+4\sum_{k=1}^{n-1}(2n-2k)x^{3k-1}\\
 &&+4\sum_{k=1}^{n-1}(n-k)x^{3k}+4x^{3n-2}
   \end{eqnarray*}
\end{thm}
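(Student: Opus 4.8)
The plan is to follow the template already used in Theorems~\ref{thm2.1}--\ref{thm2.4}, now specialised to paths joining a degree-$2$ vertex to a degree-$3$ vertex in $L'_n$. First I would choose a vertex labelling that respects the left--right mirror symmetry of $L'_n$, so that the degree-$2$ vertices fall into two mirror halves and likewise the degree-$3$ vertices. With such a labelling the rectangular distance array $D^{2,3}_{L'_n}$ takes the block form $\left(\begin{array}{cc} A_0 & A_1 \\ A_1 & A_0\end{array}\right)$, in which $A_0$ collects the distances between a degree-$2$ and a degree-$3$ vertex lying in the same half and $A_1$ collects the cross-half distances. The first concrete task is to write out $A_0$ and $A_1$ explicitly, exactly as was done for $L_n$ in the proof of Theorem~\ref{thm2.2}.

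The structural fact I would lean on is that $L'_n$ is the linear $[n-1]$phenylene with one four-membered ring fused onto each terminal hexagon. Hence, away from the two ends, the entries of $A_0$ and $A_1$ repeat with the same period-$3$ arithmetic progression seen for $L_n$: reading along a row or column the distances advance through the residues $1,2,0\pmod 3$, which is precisely the origin of the three families of exponents $3k-2,\,3k-1,\,3k$ in the statement. For each distance value $\ell$ I would record its multiplicity through the bookkeeping rule $c_\ell=\#(\ell)\in A_0\cdot\#(A_0)+\#(\ell)\in A_1\cdot\#(A_1)$ used throughout the paper, and then group the monomials $c_\ell x^\ell$ by the residue of $\ell$ modulo $3$. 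Carrying out this grouping over the periodic interior of the matrices yields the three sums $4\sum_{k=2}^{n-1}(n-k+2)x^{3k-2}$, $4\sum_{k=1}^{n-1}(2n-2k)x^{3k-1}$ and $4\sum_{k=1}^{n-1}(n-k)x^{3k}$, with the stated summation ranges reflecting where the regular pattern first switches on and where it is truncated by the opposite end.

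The delicate point, and where I expect the main obstacle, is the boundary behaviour forced by the two terminal squares. The fused four-membered rings introduce degree-$2$ and degree-$3$ vertices whose distance profiles violate the generic interior pattern; these are exactly what produce the isolated terms $4nx$ (the cluster of shortest paths concentrated at the two ends, to be compared with the $2(2n-2)x$ term of Theorem~\ref{thm2.2}) and $4x^{3n-2}$ (the lone longest-distance class joining opposite ends). I would therefore separate the anomalous first and last rows and columns of $A_0$ and $A_1$, count their contributions directly, and then check that the purely periodic remainder indexes cleanly from $k=2$ (for the $3k-2$ family) or $k=1$ (for the other two) up to $n-1$, matching the limits above. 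As a final consistency check I would verify that the sum of all coefficients equals the product (number of degree-$2$ vertices)$\times$(number of degree-$3$ vertices) of $L'_n$, and test the whole formula on a small case such as $n=2$ or $n=3$ by drawing the graph and counting the $2$--$3$ paths directly.
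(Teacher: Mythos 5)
Your plan coincides with the paper's own proof: the authors write $D^{2,3}_{L'_{n}}$ in exactly the block form $\left(\begin{array}{cc} A_{0} & A_{1}\\ A_{1} & A_{0}\end{array}\right)$ with explicit rectangular submatrices $A_{0},A_{1}$ of order $(n+1)\times(2n-2)$, apply the same counting rule $c_{\ell}=\#(\ell)\in A_{0}\cdot\#(A_{0})+\#(\ell)\in A_{1}\cdot\#(A_{1})$, and group the resulting coefficients by the residue of $\ell$ modulo $3$, with the anomalous end terms $4nx$ and $4x^{3n-2}$ handled separately just as you propose. All that remains in your outline is the mechanical step of actually writing out the matrix entries and tabulating the counts, which is precisely what the paper's proof consists of.
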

\begin{proof}
Here
 $D_{L'_{n}}^{2,3}=\left( \begin{array}{cc}
   A_{0}      & A_{1} \\
  A_{1}      & A_{0} \\

   \end{array}
\right),$ where
$$A_{0}=\left(\scriptsize \begin{array}{cccccccc}
  1   & 3  & 4 & 6  & \cdots & 3n-6 & 3n-5 & 3n-3\\
  1   & 1  & 2 & 4  & \cdots & 3n-8 & 3n-7 & 3n-5\\
  4   & 2  & 1 & 1  & \cdots & 3n-11 & 3n-10 & 3n-8\\
  \vdots & &   &    & \vdots &     &     &\vdots\\
  3n-8   & 3n-10  & 3n-11 & 3n-13  & \cdots & 1 &2 & 4\\
  3n-5   & 3n-7  & 3n-8 & 3n-10  & \cdots & 2 &1 & 1\\
  3n-3   & 3n-5  & 3n-6 & 3n-8  & \cdots & 4 &3 & 1\\
   \end{array}
\right)$$
and
$$A_{1}=\left(\scriptsize \begin{array}{cccccccc}
  2   & 3  & 5 & 6  & \cdots & 3n-6 & 3n-4 & 3n-3\\
  1   & 2  & 4 & 5  & \cdots & 3n-7 & 3n-5 & 3n-4\\
  2   & 1  & 1 & 2  & \cdots & 3n-10 & 3n-8 & 3n-7\\
  \vdots & &   &    & \vdots &     &     &\vdots\\
  3n-7   & 3n-8  & 3n-10 & 3n-11  & \cdots & 1 &1 & 2\\
  3n-4   & 3n-5  & 3n-7 & 3n-8  & \cdots & 4 &2 & 1\\
  3n-3   & 3n-4  & 3n-6 & 3n-7  & \cdots & 5 &3 & 2\\
  \end{array}
\right)$$
with order $(n+1)\times (2n-2)$.

\noindent $c_{1} = \#(1)\in A_{0}\times \#(A_{0})+\#(1)\in A_{1}\times \#(A_{1})=(2n)(2)+(0)(2)=4n$\\
\noindent $c_{2} = \#(2)\in A_{0}\times \#(A_{0})+\#(2)\in A_{1}\times \#(A_{1})=(2n-4)(2)+(2n)(2)=8n-8$\\
\noindent $c_{3} = \#(3)\in A_{0}\times \#(A_{0})+\#(3)\in A_{1}\times \#(A_{1})=(2)(2)+(2n-4)(2)=4n-4$\\
\noindent $c_{4} = \#(4)\in A_{0}\times \#(A_{0})+\#(4)\in A_{1}\times \#(A_{1})=(2n-2)(2)+(2)(2)=4n$\\
\noindent $c_{5} = \#(5)\in A_{0}\times \#(A_{0})+\#(5)\in A_{1}\times \#(A_{1})=(2n-6)(2)+(2n-2)(2)=8n-16$\\
\noindent $c_{6} = \#(6)\in A_{0}\times \#(A_{0})+\#(6)\in A_{1}\times \#(A_{1})=(2)(2)+(2n-6)(2)=4n-8$\\
\noindent $c_{3n-4} = \#(3n-4)\in A_{0}\times \#(A_{0})+\#(3n-4)\in A_{1}\times \#(A_{1})=(0)(2)+(4)(2)=8$\\
\noindent $c_{3n-3} = \#(3n-3)\in A_{0}\times \#(A_{0})+\#(3n-3)\in A_{1}\times \#(A_{1})=(2)(2)+(0)(2)=4$\\
\noindent $c_{3n-2} = \#(3n-2)\in A_{0}\times \#(A_{0})+\#(3n-2)\in A_{1}\times \#(A_{1})=(0)(2)+(2)(2)=4$
\end{proof}

\begin{thm}\label{thm2.6} Let $n\geq 2$. Then
\begin{eqnarray*}
  H^{3,3}_{b}(L'_{n}) &=& 2(2n-3)x^{1}+2\sum_{k=2}^{n-1}(3n-3k-1)x^{3k-2}+2\sum_{k=1}^{n-1}(2n-2k-1)x^{3k-1}\\&&+2\sum_{k=1}^{n-1}(3n-3k-2)x^{3k}.
   \end{eqnarray*}
\end{thm}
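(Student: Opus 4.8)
The plan is to follow the same matrix-counting scheme used in Theorems~\ref{thm2.1}--\ref{thm2.5}. First I would label the degree-$3$ vertices of $L'_{n}$ in order along the chain and exploit the left--right reflection symmetry of the phenylene to write the distance matrix in the block form
\[
D_{L'_{n}}^{3,3}=\left(\begin{array}{cc} A_{0} & A_{1}\\ A_{1} & A_{0}\end{array}\right),
\]
where $A_{0}$ records the distances between degree-$3$ vertices lying in the same half of the molecule and $A_{1}$ the distances between vertices in opposite halves. As in Theorem~\ref{thm2.3}, the diagonal block $A_{0}$ occurs twice and the off-diagonal block $A_{1}$ once, so that every path is counted with multiplicities $\#(A_{0})=2$ and $\#(A_{1})=1$.

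Next I would determine the explicit entries of $A_{0}$ and $A_{1}$. Reading the chain ring by ring, the distances between degree-$3$ vertices follow the repeating local pattern forced by the alternating hexagon--square structure (small $+1$ or $+2$ steps inside a ring, then a jump to pass to the next ring), so that each antidiagonal band of $A_{0}$ and of $A_{1}$ is an arithmetic progression with common difference $3$. This identifies, for every admissible distance value $k$, the two counts $\#(k)\in A_{0}$ and $\#(k)\in A_{1}$ as piecewise-linear functions of $k$ and $n$.

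I would then form the coefficients
\[
c_{k}=\#(k)\in A_{0}\times\#(A_{0})+\#(k)\in A_{1}\times\#(A_{1})=2\,\#(k)\in A_{0}+\#(k)\in A_{1},
\]
exactly as in the sample computations of Theorem~\ref{thm2.3}. Sorting the attainable distances by their residue modulo $3$ groups the $c_{k}$ into the three arithmetic families with exponents $3k-2$, $3k-1$, and $3k$ appearing in the statement, whose coefficients collapse to $2(3n-3k-1)$, $2(2n-2k-1)$, and $2(3n-3k-2)$ respectively; summing $c_{k}x^{k}$ over all attained $k$ then yields the claimed $H^{3,3}_{b}(L'_{n})$.

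The main obstacle will be the bookkeeping at the two ends of the chain. The shortest distance $k=1$ between adjacent degree-$3$ vertices does not obey the generic $k=1$ instance of the $3k-2$ family, which is precisely why that sum is forced to begin at $k=2$ and why the isolated term $2(2n-3)x^{1}$ must be pinned down on its own; likewise the terminal squares perturb the counts for the smallest and largest few distances. I would therefore verify these boundary coefficients separately before asserting the closed forms, and I would cross-check the full expression at a small value such as $n=2$ or $n=3$ to catch off-by-one errors.
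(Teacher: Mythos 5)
Your proposal follows essentially the same route as the paper: the paper likewise writes $D_{L'_{n}}^{3,3}$ in the block form $\left(\begin{smallmatrix} A_{0} & A_{1}\\ A_{1} & A_{0}\end{smallmatrix}\right)$, counts each distance value via $c_{k}=2\,\#(k)\in A_{0}+\#(k)\in A_{1}$, and groups the results by residue modulo $3$, with the $x^{1}$ term handled separately (its count $4n-6=2(2n-3)$ matches yours). The only difference is one of completeness rather than method: the paper actually tabulates the explicit entries of $A_{0}$ and $A_{1}$ and lists the resulting counts $c_{1},\dots,c_{6}$ and $c_{3n-5},c_{3n-4},c_{3n-3}$, whereas your write-up stops at the plan, so to finish you would need to carry out exactly that tabulation.
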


\begin{proof}
Here the distance matrix is
$D_{L'_{n}}^{3,3}=\left( \begin{array}{cc}
  A_{0}     & A_{1} \\
  A_{1}   & A_{0}\\
   \end{array}
\right),$
where $ A_{0}$ and $A_{1}$ are submatrices of order $2n-2\times 2n-2$, and are:

$$A_{0}=\left(\scriptsize \begin{array}{ccccccccc}
  0   & 2  & 3 & 5  & 6  & \cdots & 3n-7 & 3n-6 & 3n-4\\
      & 0  & 1 & 3  & 4  & \cdots & 3n-9 & 3n-8 & 3n-6\\
      &    & 0 & 2  & 3  & \cdots & 3n-10 & 3n-9 & 3n-7\\
      &    &   & \ddots & & \vdots &   \vdots &\vdots    &\vdots\\

      &    &   & & & 0  &2 &3 & 5\\
      &    &   &  & &    &0 &1 & 3\\
      &    &   &  &&    &  &0 & 2\\
      &    &   &  &&    &  &  & 0\\
   \end{array}
\right)$$

$$A_{1}=\left(\scriptsize \begin{array}{cccccccc}
  1   & 3  & 4 & 6 & \cdots & 3n-6 &3n-5  &3n-3 \\
      3   & 1  & 2 & 4 & \cdots & 3n-8 &3n-7  &3n-5 \\
   4   & 2  & 1 & 3 & \cdots & 3n-9 &3n-8  &3n-6 \\
 6   & 4  & 3 & 1 & \cdots & 3n-11 &3n-10  &3n-8 \\
  \vdots &   &&       & \vdots  &  & &\vdots\\
  3n-6   & 3n-8  & 3n-9 &3n-11& \cdots & 1 & 2  &4  \\
  3n-5  & 3n-7  & 3n-8&3n-10 & \cdots & 2 &1  &3 \\
  3n-3   & 3n-5  & 3n-6 &3n-8& \cdots & 4 &3  &1 \\
   \end{array}
\right)$$

\noindent The following are counts of distinct paths in $L_{n}$. \\
\noindent $c_{1} = \#(1)\in A_{0}\times \#(A_{0})+\#(1)\in A_{1}\times \#(A_{1})=(n-2)(2)+(2n-2)(1)=4n-6$\\
\noindent $c_{2} = \#(2)\in A_{0}\times \#(A_{0})+\#(2)\in A_{1}\times \#(A_{1})=(n-1)(2)+(2n-4)(1)=4n-6$\\
\noindent $c_{3} = \#(3)\in A_{0}\times \#(A_{0})+\#(3)\in A_{1}\times \#(A_{1})=(2n-4)(2)+(2n-2)(1)=6n-10$\\
\noindent $c_{4} = \#(4)\in A_{0}\times \#(A_{0})+\#(4)\in A_{1}\times \#(A_{1})=(n-3)(2)+(4n-8)(1)=6n-14$\\
\noindent $c_{5} = \#(5)\in A_{0}\times \#(A_{0})+\#(5)\in A_{1}\times \#(A_{1})=(n-2)(2)+(2n-6)(1)=4n-10$\\
\noindent $c_{6} = \#(6)\in A_{0}\times \#(A_{0})+\#(6)\in A_{1}\times \#(A_{1})=(2n-6)(2)+(2n-4)(1)=6n-16$\\
\noindent $c_{3n-5} = \#(3n-5)\in A_{0}\times \#(A_{0})+\#(3n-5)\in A_{1}\times \#(A_{1})=(0)(2)+(4)(1)=4$\\
\noindent $c_{3n-4} = \#(3n-4)\in A_{0}\times \#(A_{0})+\#(3n-4)\in A_{1}\times \#(A_{1})=(1)(2)+(0)(1)=2$\\
\noindent $c_{3n-3} = \#(3n-3)\in A_{0}\times \#(A_{0})+\#(3n-3)\in A_{1}\times \#(A_{1})=(0)(2)+(2)(1)=2$
\end{proof}
\begin{ex}
If we take $n=6$, then the base polynomials come from Theorems~\ref{thm2.4}, \ref{thm2.5}, and \ref{thm2.6}, and are respectively
$H^{2,2}_{b}(L'_{5})=2x+4x^{2}+14x^{3}+6x^{4}+4x^{5}+8x^{6}+4x^{7}+4x^{8}+6x^{9}+2x^{10}+4x^{11}+4x^{12}+2x^{13}+2x^{14}$,
$H^{2,3}_{b}(L'_{5})=20x+32x^{2}+16x^{3}+20x^{4}+24x^{5}+12x^{6}+16x^{7}+16x^{8}+8x^{9}+12x^{10}+8x^{11}+4x^{12}+4x^{13}$,
 $H^{3,3}_{b}(L'_{5})=14x+14x^{2}+20x^{3}+16x^{4}+10x^{5}+14x^{6}+10x^{7}+6x^{8}+8x^{9}+4x^{10}+2x^{11}+2x^{12}$. It now follows that the Schultz polynomial, modified Schultz polynomial, Schultz index, and modified Schultz index take the forms:
\begin{enumerate}
  \item $H_{1}(L'_{5})=4H^{2,2}_{b}+5H^{2,3}_{b}+6H^{3,3}_{b}
       =4\big[2x+4x^{2}+14x^{3}+6x^{4}+4x^{5}+8x^{6}+4x^{7}+4x^{8}+6x^{9}+2x^{10}+4x^{11}+4x^{12}+2x^{13}+2x^{14}\big]+5\big[
      20x+32x^{2}+16x^{3}+20x^{4}+24x^{5}+12x^{6}+16x^{7}+16x^{8}+8x^{9}+12x^{10}+8x^{11}+4x^{12}+4x^{13}\big]      +6\big[14x+14x^{2}+20x^{3}+16x^{4}+10x^{5}+14x^{6}+10x^{7}+6x^{8}+8x^{9}+4x^{10}+2x^{11}+2x^{12}\big] =192x+260x^2+ 256x^3+220x^4+196x^5+176x^6+156x^7+132x^8+112x^9+92x^{10}+68x^{11}+48x^{12}+28x^{13}+8x^{14}+8x^{15}$\\

  \item $H_{2}(L'_{5})=4H^{2,2}_{b}+6H^{2,3}_{b}+9H^{3,3}_{b}=254x+334x^2+332x^3+288x^4+250x^5+230x^6+202x^7+166x^8+144x^9+116x^{10}+82x^{11}+58x^{12}+32x^{13}+8x^{14}+8x^{15}$\\

  \item $S(L'_{5})=\frac{d}{dx}H_{1}(L_{5})|_{x=1}= 10392$\\

  \item $MS(L'_{5})=\frac{d}{dx}H_{2}(L_{5})|_{x=1}= 13144 $
\end{enumerate}
\end{ex}

\section{Conclusions}
In this article we followed the divide and conquer rule to find Schultz invariants of $L_{n}$ and $L'_{n}$. It was observed that obtaining the general closed forms of the Schultz invariants directly by definition is extremely difficult. In order to handle the situation we counted the paths among vertices of degrees 2-2, 2-3, and 3-3 separately and represented them in terms of polynomials. These polynomials served as bases for computing Schultz invariants as one can directly find Schultz polynomial, modified Schultz polynomial, Schultz index, and modified Schultz index using them. Finally, we gave two examples to show how these bases actually work.


\end{document}